\newtheorem{identity}{Identity}[section]
\newtheorem{assumptions}{Assumptions}[section]
\newcommand{\brackets}[1]{\ensuremath{\left[#1\right]}}
\newcommand{\bracketsInverse}[2][1]{\ensuremath{\brackets{#2}^{-#1}}}
\newcommand{\ceilfrac}[2]{\ensuremath{\left\lceil\frac{#1}{#2}\right\rceil}}
\newcommand{\denominator}[1]{\ensuremath{\denominatorSymbol_{#1}}}
\newcommand{\denominatorSymbol}{\ensuremath{D}}
\newcommand{\equivMod}[1]{\ensuremath{\underset{#1}{\equiv}}}
\newcommand{\eulerPhi}[1]{\ensuremath{\phi\left(#1\right)}}
\newcommand{\exponent}[1]{\ensuremath{\exponentSymbol_{#1}}}
\newcommand{\exponentSymbol}{\ladicExponentSymbol}
\newcommand{\fibonacci}[1]{\ensuremath{F_{#1}}}
\newcommand{\graded}[1]{\ensuremath{{\bf #1}}}
\newcommand{\integers}{\ensuremath{\mathbb{Z}}}
\newcommand{\iterate}[1]{\ensuremath{\iterateSymbol_{#1}}}
\newcommand{\iterateSymbol}{\ensuremath{n}}
\newcommand{\ladicDigit}[1]{\ensuremath{\ladicDigitSymbol_{#1}}}
\newcommand{\ladicDigitSymbol}{\ensuremath{b}}
\newcommand{\ladicExponentSymbol}{\ensuremath{e}}
\newcommand{\lExp}[1]{\ladicExponentSymbol_{#1}}
\newcommand{\leps}[1]{\ladicExponentPrefixSum{#1}}
\newcommand{\less}[1]{\ladicExponentSuffixSum{#1}}
\newcommand{\ladicExponentPrefixSum}[1]{\ensuremath{\ladicExponentSumSymbol_{#1}}}
\newcommand{\ladicExponentSuffixSum}[1]{\ensuremath{\overline{\ladicExponentSumSymbol}_{#1}}}
\newcommand{\ladicExponentSumSymbol}{\ensuremath{E}}
\newcommand{\ladicResidueApproximation}[1]{\ensuremath{\ladicResidueApproximationSymbol_{#1}}}
\newcommand{\ladicResidueApproximationSymbol}{\ensuremath{\Lambda}}
\newcommand{\ladicResidue}[1]{\ensuremath{\ladicResidueSymbol_{#1}}}
\newcommand{\ladicResidueSymbol}{\ensuremath{\lambda}}
\newcommand{\List}[3]{\ensuremath{\left(#1_{#2},\ldots,#1_{#3}\right)}}
\newcommand{\listKernel}[3]{\ensuremath{#1_{#2},\ldots,#1_{#3}}}
\newcommand{\madicDigit}[1]{\ensuremath{\madicDigitSymbol_{#1}}}
\newcommand{\madicDigitSymbol}{\ensuremath{d}}
\newcommand{\madicExponentSymbol}{\ensuremath{f}}
\newcommand{\meps}[1]{\madicExponentPrefixSum{#1}}
\newcommand{\mess}[1]{\madicExponentSuffixSum{#1}}
\newcommand{\madicExponentPrefixSum}[1]{\ensuremath{\madicExponentSumSymbol_{#1}}}
\newcommand{\madicExponentSuffixSum}[1]{\ensuremath{\overline{\madicExponentSumSymbol}_{#1}}}
\newcommand{\madicExponentSumSymbol}{\ensuremath{F}}
\newcommand{\madicResidueApproximation}[1]{\ensuremath{\madicResidueApproximationSymbol_{#1}}}
\newcommand{\madicResidueApproximationSymbol}{\ensuremath{M}}
\newcommand{\madicResidue}[1]{\ensuremath{\madicResidueSymbol_{#1}}}
\newcommand{\madicResidueSymbol}{\ensuremath{\mu}}
\newcommand{\naturals}{\ensuremath{\mathbb{N}}}
\newcommand{\negPower}[1]{\left(-\right)^{#1}}
\newcommand{\nOpenSet}[1]{\ensuremath{\left[#1\right)}}
\newcommand{\numerator}[1]{\ensuremath{\numeratorSymbol_{#1}}}
\newcommand{\numeratorSymbol}{\ensuremath{N}}
\newcommand{\nZeroOpenSet}[1]{\ensuremath{\left[#1\right)_0}}
\newcommand{\nSet}[1]{\ensuremath{\left[#1\right]}}
\newcommand{\nZeroSet}[1]{\ensuremath{\left[#1\right]_0}}
\newcommand{\parentheses}[1]{\ensuremath{\left(#1\right)}}
\newcommand{\pfrac}[2]{\ensuremath{\left(\frac{#1}{#2}\right)}}
\newcommand{\resRep}[2]{\ensuremath{\brackets{#1}_{#2}}}
\newcommand{\resRepInv}[2]{\ensuremath{\resRep{#1}{#2}^{-1}}}
\newcommand{\translationMatrixEntrySymbol}{\ensuremath{a}}
\newcommand{\translationValue}[1]{\ensuremath{\translationMatrixEntrySymbol_{#1}}}
\newcommand{\translationValueSymbol}{\ensuremath{\translationMatrixEntrySymbol}}
\newcommand{\twoSummandTerm}[2]{\ensuremath{\sigma\left(#1,#2\right)}}
\newcommand{\twoSumTerm}[1]{\ensuremath{\mathcal{S}\left(#1\right)}}
\newcommand{\twoTailSum}[1]{\ensuremath{\widehat{\Lambda}_{#1}}}
\title{A Dual-Radix Approach to Steiner's 1-Cycle Theorem}
\author{Andrey Rukhin}
\institute{{Andrey Rukhin\\
email: {\tt andrey.rukhin@gmail.com}\\This work was supported by the Naval Surface Warfare Center Dahlgren
Division's In-House Laboratory Independent Research Program.}}
\newcounter{assumptionCounter}
\begin{document}
\maketitle


\begin{abstract}{}This article presents  three  algebraic proofs of Steiner's {\sl 1-Cycle Theorem} \cite{Steiner} within the context of the (accelerated) $3x+1$ dynamical system.  Furthermore, under an assumption of an exponential upper-bound on the iterates,  the article demonstrates that the only $1$-cycles in the (accelerated) $3x-1$ dynamical system are $(1)$ and $(5,7)$. 
\end{abstract}

\section{Introduction}
Within the context of the $3x+1$ Problem, {Steiner's \sl 1-cycle Theorem} \cite{Steiner} is a result pertaining to the non-existence of {\sl $1$-cycles} (or {\sl circuits}): for all $a,b\in \mathbb{N}$, Steiner shows that a rational expression of the form 
\begin{equation}\label{steinerRatio}
 \frac{2^a - 1}{2^{a+b}-3^b} \end{equation}
  does not assume a positive integer value except in the case where $a = b = 1$. In the proof, the author appeals to the continued fraction expansion of $\log_2 3$, transcendental number theory, and extensive numerical computation (see \cite{SimonsDeWeger}).  This argument serves as the basis for demonstrating the non-existence of $2$-cycles in \cite{Simons2Cycles},  and  the non-existence of $m$-cycles  in  \cite{SimonsDeWeger} where $m\leq 68$.  
  
  	The result has been strengthened in \cite{Brox} as follows:   Let $C$ denote a cycle in the (accelerated) $3x+1$ dynamical system $T: 2\integers+1 \to 2\integers+1$, defined by the mapping 
  	\[
  	T(x) = \frac{3x + 1}{2^{e(x)}}
  	\] where $e(x)$ is the $2$-adic valuation of the quantity $3x+1$.   If $e(x) \geq 2$,  the element $x$ is said to be a {\sl descending element} in $C$, and we define $\delta(C)$ to be the number of descending elements in $C$.     Theorem 1.1 in \cite{Brox} demonstrates that the number of cycles satisfying the inequality
  $
    \delta(C) < 2\log\parentheses{|C|}
  $ is finite; Steiner's result addresses the case where $d(C) = 1$ by showing that the only (accelerated) cycle with a single descending element is the cycle including $1$.

 However, the author in \cite{LagariasATT} declares that the ``most remarkable thing about [Steiner's theorem] is the weakness of its conclusion compared to the strength of the methods used in its proof." This article offers alternative proofs of this theorem by demonstrating the non-integrality of  the maximal element of a $1$-cycle
 \[
\frac{(2^{a+1}+1)3^{b-1} - 2^{a+b}}{2^{a+b}- 3^{b}} = 2\cdot3^{b-1}\pfrac{2^{a}-1}{2^{a+b}-3^{b}} - 1
\] within a variety of algebraic settings.
 Assuming the upper bound on periodic iterates established in \cite{BelagaMignotte}, these proofs exploit that fact that the denominator in the above expression is coprime to both $2$ and $3$.
 Based on the results in \cite{Rukhin18}, the first proof appeals to elementary modular arithmetic,  the second proof exploits identities on weighted binomial coefficients and the Fibonacci numbers, and  the third proof analyzes the $2$-adic and $3$-adic digits of the values in a $1$-cycle.

The article concludes with a similiar analyses of the existence of $1$-cycles within the (accelerated) $3x-1$ dynamical system: we will demonstrate that, under the assumption of an exponential upper bound on the iterate values of a periodic orbit, the only $1$-cycles are $(1)$ and $(5,7)$.

\section{Overview}

\subsection{Notation}

This manuscript inherits all of the notation and definitions established in \cite{Rukhin18}, which we summarize here.  Let $\tau \in \naturals$, and  let $\graded{e},\graded{f} \in \naturals^{\tau}$ where $\graded{e} = \List{\ladicExponentSymbol}{0}{\tau-1}$ and $\graded{f} = \List{\madicExponentSymbol}{0}{\tau-1}$.  For each $u\in \integers$, define $\leps{u} = \sum_{0\leq w < u} \lExp{w \bmod \tau}$ and  $\less{u} = \sum_{0\leq w < u} \lExp{(\tau-1-w) \bmod \tau}$; we will define $\meps{u}$ and $\mess{u}$ in an analogous manner with the elements of $\graded{f}$.

For a positive integer $b$, we will write $\nSet{b} = \{1,\ldots,b\}$ and $\nOpenSet{b} = \{1,\ldots, b-1\}$; furthermore, we will write $\nZeroSet{b} = \nSet{b} \cup \{0\}$ and $\nZeroOpenSet{b} = \nOpenSet{b} \cup \{0\}$. 

 For any integer $a$ and positive base $b \ (b\geq 1)$, let 
$
[a]_b
$ denote the element\footnote{This element is also known as the {\sl standard} (or {\sl canonical}) {\sl representative} of the equivalence class $\overline{a} \bmod b$.} of $\nZeroOpenSet{b}$ that satisfies the equivalence
$[a]_b \equiv a \bmod b$.  
We will also write $\resRepInv{a}{b}$ to denote the element in $\nZeroOpenSet{b}$ that satisfies the equivalence $\resRep{a}{b}\resRepInv{a}{b} \equivMod{b}1$.  

For the maximal iterate value $\iterate{\max}$ within a $1$-cycle, we will define 
$
\madicResidue{\tau} = \iterate{\max} \bmod 3^{\tau}
$ and
$
\ladicResidue{\tau} = \iterate{\max} \bmod 2^{e+\tau-1}
$ for $\ladicExponentSymbol,\tau \in \naturals$ 

We will write $\negPower{u}$ to denote the quantity $(-1)^u$ for each $u\in \naturals_0$.
%

\subsection{Argument Overview}

The dual-radix approach to the non-existence of circuits is based upon the following premises:

\begin{enumerate}[i.]
%

\item   We will establish an upper bound of $3^{\tau}$ for a potential, periodic iterate value over $\naturals$ for the (accelerated) $3x+1$ Problem.  In this context,   the authors in \cite{BelagaMignotte}  have demonstrated that the maximal iterate $\iterate{\max}$ within a periodic orbit admits the upper bound
\begin{equation}\label{iterateUpperBound}
\iterate{\max}  < \frac{\pfrac{3}{2}^{\tau-1}}{1 - \frac{3^{\tau}}{2^{\less{\tau}}}}\leq  \tau^C\pfrac{3}{2}^{\tau-1} = o\parentheses{3^{\tau-1}}
\end{equation} for some effectively computable constant $C$ (by applying the result in \cite{BakerWustholz}). A recent upper bound on $C$  is available in \cite{Rhin}, in which the author establishes the inequality\footnote{In their notation, we set $u_0 = 0$, $u_1 = -\less{\tau}$, and $u_2 = \tau$.}
\begin{equation}\label{rhinInequality}
\left|-\less{\tau}\log 2 +\tau \log 3\right| \geq \less{\tau}^{-13.3};
\end{equation}  consequently, assuming $2^{\less{\tau}}>3^{\tau}$, we can bound\footnote{We can shed the logarithms:  when $|w| < 1$,  the  power series expansion of $\log (1+w) = \sum_{u\geq 1}(-1)^{u-1}\frac{w^u}{u}$ yields $|\log(1+w)| \leq 2|w|$ when $|w| \leq \frac{1}{2}$. See \cite{Evertse} (Corollary 1.6). }  the denominator in (\ref{iterateUpperBound}) from below
$
1 - \frac{3^{\tau}}{2^{\less{\tau}}} \geq  \frac{\less{\tau}^{-13.3}}{2}.
$ According to \cite{Eliahou}, for a periodic orbit over $\naturals$  of length $\less{\tau}$, the ratio $\frac{\less{\tau}}{\tau}$ satisfies the inequality
\[
\frac{\less{\tau}}{\tau} \leq \lg\parentheses{3+\frac{1}{\iterate{\min}}} \leq 2; 
\]   numerical computation yields
$
\iterate{\max}< \pfrac{3}{2}^{\tau-1} 2\cdot (2\tau)^{13.3} < 3^{\tau} 
$ when $\tau \geq 103$.  
 
 Thus, if $\iterate{\max}> 3^{\tau}$ and $\iterate{\max}\in \naturals$, then $\tau < 103$. However, the author in \cite{Garner} demonstrates that the length of a non-trivial periodic orbit (excluding $1$) over \naturals\ must satisfy the inequality $2\tau \geq \less{\tau} \geq 35,400$.

 Thus, if $\iterate{\max}\in \naturals$, then $\iterate{\max} < 3^{\tau} < 2^{\less{\tau}}$, and the equalities
$
\iterate{\max} = \madicResidue{\tau} =  \ladicResidue{\tau}
$ must hold.
\item Within a circuit of order $\tau$ in the (accelerated) $3x+1$ dynamical system,  the maximal element equals
\[
\frac{(2^e+1)3^{\tau-1} - 2^{e+\tau-1}}{2^{e+\tau-1}- 3^{\tau}} = 2\cdot3^{\tau-1}\pfrac{2^{e-1}-1}{2^{e+\tau-1}-3^{\tau}} - 1
\] for some $e\in \naturals$ (see \cite{BohmSontacchi78}).

When $\tau=1$, we note that $2^{e}-3   
\geq 2^{e-1}-1$ for $e\geq 2$; thus the ratio in (\ref{steinerRatio}), evaluated at $a=e-1$ and $b=1$, is at most one. When $e=1$, the left-hand side of the equality above is negative, and the ratio in (\ref{steinerRatio}) vanishes.

  When $\tau>1$, we will analyze the difference of canonical residues
\[
\madicResidue{\tau} = \brackets{(2^e+1)3^{\tau-1} - 2^{e+\tau-1}}[2^{e+\tau-1}]^{-1} \bmod 3^{\tau}
\] and
\[
\ladicResidue{\tau} = \brackets{(2^e+1)3^{\tau-1} - 2^{e+\tau-1}}[-3^{\tau}]^{-1} \bmod 2^{\less{\tau}};
\] we will demonstrate the inequality $\madicResidue{\tau} \neq \ladicResidue{\tau}$  (contradicting the assumption that  $\iterate{\max} = \madicResidue{\tau} = \ladicResidue{\tau}$  as per above).

  We will also perform similar analyses on the maximal element of a circuit within the (accelerated) $3x-1$ dynamical system; we will show that, assuming\footnote{Appealing to a similar argument outlined abve, this condition holds for finitely many $\tau$ for each fixed $e\in \naturals$.} the inequality  $\iterate{\max} < 2^{\less{\tau}}$,  a circuit over $\naturals$ exists if and only if either $e=1$, or $\tau=e=2$. 
\end{enumerate}

\section{Circuits with the $3x+1$ Dynamical System}
 Throughout the remainder of the manuscript, unless otherwise stated, we assume that
\begin{enumerate}[i.]
	\item  $\tau \in \naturals$ with $\tau\geq 2$;
	\item  $\graded{f} = \List{1}{}{} \in \naturals^{\tau} $;
	\item  $\graded{e}  = (\underbrace{\listKernel{1}{}{}}_{\tau-1},e)$  for some $e\in \naturals$; and
	\item $\graded{a} = \List{\translationValueSymbol}{0}{\tau-1} \in \{-1,+1\}^{\tau}$.
\end{enumerate}  

We begin with the following assumptions.
  
\begin{assumptions}[\arabic{chapter}.\arabic{section}.\arabic{assumptionCounter}] \label{hypotheses::threeEcksPlusOneResiduesOfMaxIterate}Assume  3.1 and  3.3 from \cite{Rukhin18},  and let 
$\graded{a} = \graded{1}^{\tau}$. 
  Let $N  = (2^e+1)3^{\tau-1} - 2^{e+\tau-1}$, and
  let $D = 2^{e+\tau-1} - 3^{\tau}$ where $D>0$.
  
Assume that $$\iterate{\max} = \frac{N}{D}  < \min\left(3^{\tau}, 2^{\less{\tau}}\right),$$  let
$\madicResidue{\tau} = \iterate{\max}\bmod 3^{\tau}$, and let $\ladicResidue{\tau} = \iterate{\max}\bmod 2^{e+\tau-1}$.
\end{assumptions} 
\stepcounter{assumptionCounter}

Under these assumptions, if $\iterate{\max} \in \naturals$, then the chain of equalities 
$\iterate{\max} =  \madicResidue{\tau} = \ladicResidue{\tau}$ holds.

Our goal for the remainder of this subsection is to prove the following theorem:

\begin{theorem}\label{theorem::threeEcksPlusOneResidues}
Assume  (\ref{hypotheses::threeEcksPlusOneResiduesOfMaxIterate}).

We have the equalities 
\begin{equation}
\madicResidue{\tau} = 
	\begin{cases}
		3^{\tau-1}-1 & e\equivMod{2} 0\\
		3^{\tau}-1& e\equivMod{2} 1
	\end{cases}\notag
\end{equation} when $\tau \equivMod{2} 0$, and
\begin{equation}
\madicResidue{\tau} = 
	\begin{cases}
		2\cdot 3^{\tau-1}-1 & e\equivMod{2} 0 \\
		3^{\tau}-1& e\equivMod{2} 1 
	\end{cases}\notag
\end{equation} when $\tau \equivMod{2} 1$.  

Furthermore, when  $\tau \equivMod{2} 1 \equivMod{2} e-1$, then
\[
\ladicResidue{\tau} = 2^e\pfrac{2^{\tau-1}-1}{3} + \frac{2^{e+\tau-1}-1}{3}= \frac{(2^{\tau}-1)2^e-1}{3}.
\]
\end{theorem}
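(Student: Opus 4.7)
My plan is to reduce $N$ and $D$ modulo the appropriate prime powers, invert the denominator in each ring, and simplify via parity.

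For the $3$-adic claim, I would begin from $D \equivMod{3^{\tau}} 2^{e+\tau-1}$ and $N \equivMod{3^{\tau}} (2^{e}+1)\,3^{\tau-1} - 2^{e+\tau-1}$, obtaining
\[
\madicResidue{\tau} \equivMod{3^{\tau}} (2^{e}+1)\cdot 2^{-(e+\tau-1)}\cdot 3^{\tau-1} - 1.
\]
The decisive observation is that the $3^{\tau-1}$ prefactor means only the class of $c := (2^{e}+1)\cdot 2^{-(e+\tau-1)}$ modulo $3$ matters, so $\madicResidue{\tau}$ equals $c\cdot 3^{\tau-1} - 1$ in the canonical range with $c\in\{0,1,2\}$. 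Using $2 \equivMod{3} -1$, a case split on the parities of $e$ and $\tau$ yields the three patterns: when $e$ is odd, $2^{e}+1 \equivMod{3} 0$ forces $c = 0$ and hence $\madicResidue{\tau} = 3^{\tau}-1$; when $e$ is even, $c \equivMod{3} (-1)^{\tau}$, which gives $c = 1$ for $\tau$ even and $c = 2$ for $\tau$ odd, matching the stated formulas.

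For the $2$-adic claim under $\tau$ odd and $e$ even, I reduce similarly: $N \equivMod{2^{e+\tau-1}} (2^{e}+1)\,3^{\tau-1}$ and $D \equivMod{2^{e+\tau-1}} -3^{\tau}$, yielding
\[
\ladicResidue{\tau} \equivMod{2^{e+\tau-1}} -(2^{e}+1)\cdot 3^{-1}.
\]
Rather than compute $3^{-1}$ explicitly, I would verify the candidate $x = \frac{2^{e+\tau}-2^{e}-1}{3}$ by clearing its denominator: $3x = 2^{e+\tau}-2^{e}-1 \equivMod{2^{e+\tau-1}} -(2^{e}+1)$, since $2^{e+\tau}$ is a multiple of $2^{e+\tau-1}$. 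The two remaining checks are that $x\in\integers$ (which follows from $2^{\tau-1}-1$ and $2^{e+\tau-1}-1$ each being divisible by $3$ under the parity hypotheses, and matches the decomposition $x = 2^{e}(2^{\tau-1}-1)/3 + (2^{e+\tau-1}-1)/3$) and that $0 \leq x < 2^{e+\tau-1}$, a trivial size bound.

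The main bookkeeping risk, if any, is the sign accounting for $\madicResidue{\tau}$, where I must carefully pair $2^{-(e+\tau-1)} \equivMod{3} (-1)^{e+\tau-1}$ against $(2^{e}+1) \equivMod{3} (-1)^{e}+1$ and handle the vanishing case $e$ odd separately. Once the observation that the $3^{\tau-1}$ prefactor collapses the $3$-adic problem to arithmetic modulo $3$ is in place, everything else is elementary.
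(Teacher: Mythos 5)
Your proposal is correct and follows essentially the same route as the paper's first proof (the elementary modular arithmetic argument): reduce $N$ and $D$ modulo $3^{\tau}$ and $2^{e+\tau-1}$, invert the denominator, and exploit $2\equiv -1 \pmod 3$ for the parity case split, together with the explicit form of the inverse of $-3$ modulo a power of $2$. The only cosmetic differences are that you keep the unit factor $(2^{e}+1)2^{-(e+\tau-1)}$ together rather than splitting it into two inverse terms, and you verify the $2$-adic candidate by clearing the denominator rather than summing the two explicit inverses; both variants require the same integrality and size checks that the paper performs.
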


For completeness, we have
 \begin{equation}
		\ladicResidue{\tau} = 
		\begin{cases}
			\frac{(2^{\tau-1}-1)2^e-1}{3}  & e\equivMod{2} 0\\
			2^{e+\tau-1}-\frac{2^{e}+1}{3}& e\equivMod{2} 1
		\end{cases}\notag
	\end{equation} when $\tau \equivMod{2} 0$, and
	\begin{equation}
		\ladicResidue{\tau} = 
		\begin{cases}
			\frac{(2^{\tau}-1)2^e-1}{3} & e\equivMod{2} 0 \\
			2^{e+\tau-1}-\frac{2^{e}+1}{3}& e\equivMod{2} 1 
		\end{cases}\notag
	\end{equation} when $\tau \equivMod{2} 1$.
However, in order to expedite the proofs,   we exclude three out of the four cases when the corresponding canonical $3$-residue $\madicResidue{\tau}$ is even (assuring the inequality $\madicResidue{\tau}\neq \ladicResidue{\tau}$).  We exclude the remaining case with the following lemma.

\begin{lemma}\label{Lemma::ThreeEcksPlusOneResidueCondition}
Assume that $\tau \equivMod{2} 1 \equivMod{2} e-1$; furthermore, let
$
\madicResidue{\tau} = 2\cdot3^{\tau-1} - 1,
$ and
$
\ladicResidue{\tau} = \frac{(2^{\tau}-1)2^e-1}{3}.
$
Then, the inequality $\madicResidue{\tau} \neq \ladicResidue{\tau}$ holds.
\end{lemma}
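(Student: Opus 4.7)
The plan is to reduce the putative equality $\madicResidue{\tau} = \ladicResidue{\tau}$ to a clean Diophantine identity and dispatch it with a $2$-adic valuation argument. Cross-multiplying by $3$ in the equation
\[
2\cdot 3^{\tau-1} - 1 = \frac{(2^{\tau}-1)2^e - 1}{3},
\]
consolidating constants, and dividing by $2$ shows that the supposed equality is equivalent to
\[
3^{\tau} - 1 = (2^{\tau}-1)\,2^{e-1}.
\]
Under the hypothesis $\tau \equivMod{2} 1 \equivMod{2} e-1$, together with the standing assumption $\tau\geq 2$, we have $\tau \geq 3$ odd and $e\geq 2$ even.

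The main tool is a comparison of $2$-adic valuations on the two sides of the displayed identity. On the left, the geometric factorization $3^{\tau}-1 = 2\sum_{0\leq j < \tau} 3^j$ combined with the parity of $\tau$ (so that the sum contains an odd number of odd terms and is therefore odd) gives $\valuation{2}{3^{\tau}-1} = 1$. On the right, $2^{\tau}-1$ is odd, so $\valuation{2}{(2^{\tau}-1)\,2^{e-1}} = e-1$. Consequently, whenever $e \geq 4$ the two valuations differ and the equality is impossible.

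The only case the valuation argument leaves open is $e = 2$, where the displayed identity reduces to $3^{\tau} = 2^{\tau+1}-1$. But for $\tau\geq 2$ we have $(3/2)^{\tau}\geq 9/4 > 2$, hence $3^{\tau} > 2^{\tau+1} > 2^{\tau+1}-1$, ruling this case out by a direct size comparison. The main (and only) obstacle is remembering to treat $e=2$ separately, since that is precisely where the valuation bookkeeping is inconclusive; everything else is elementary algebraic manipulation.
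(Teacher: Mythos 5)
Your proof is correct and follows essentially the same route as the paper's: both reduce the putative equality to $2(3^{\tau}-1)=(2^{\tau}-1)2^{e}$, use the oddness of $\frac{3^{\tau}-1}{2}=\sum_{0\leq j<\tau}3^{j}$ for odd $\tau$ (a $2$-adic valuation comparison) to force $e=2$, and then dispose of $e=2$ via the size comparison $(3/2)^{\tau}>2>2-2^{-\tau}$. No gaps.
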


\begin{proof}

By way of contradiction, assume that the natural number $e$ satisfies the equality
$
2\cdot3^{\tau-1} - 1 = \frac{(2^{\tau}-1)2^e-1}{3};
$ equivalently, we require that the equality
$
2\left(3^{\tau} - 1\right) = (2^{\tau}-1)2^e
$  holds. However, we have that
$$
2^{e-2}\parentheses{2^{\tau}-1} = \frac{3^{\tau}-1}{2} \equivMod{2} \sum_{0\leq w < \tau}3^w  \equivMod{2}  1
$$  for all odd, positive $\tau$.  When $e=2$, the value of $\tau$ must satisfy the equality
$
2 - \frac{1}{2^{\tau}} = \pfrac{3}{2}^{\tau};
$  however, this equality fails to hold for $\tau>1$.

\end{proof}

Lemma \ref{Lemma::ThreeEcksPlusOneResidueCondition},  Assumptions (\ref{hypotheses::threeEcksPlusOneResiduesOfMaxIterate}),  and Theorem \ref{theorem::threeEcksPlusOneResidues}, along with the bounds provided in \cite{SimonsDeWeger}, \cite{Eliahou}, and \cite{Garner},  demonstrate the non-existence of circuits in the $3x+1$ dynamical system.
%
%
%
%
%


\subsection{Elementary Modular Arithmetic}
	Our first proof of Theorem \ref{theorem::threeEcksPlusOneResidues} appeals to elementary modular arithmetic.

\begin{proof}

We will write
\[
\madicResidue{\tau} \equivMod{3^{\tau}} ND^{-1} 
 \equivMod{3^{\tau}} \brackets{(2^e+1)3^{\tau-1} - 2^{e+\tau-1}}\resRepInv{2^{e+\tau-1}}{} 
  \equivMod{3^{\tau}}\brackets{\resRepInv{2^{\tau-1}}{3^1} + \resRepInv{2^{e+\tau-1}}{3^1}} 3^{\tau-1} - 1.
\]
It follows that
$
\madicResidue{\tau} \equivMod{3^{\tau}} 3^{\tau-1}\negPower{\tau-1}\brackets{1 + \negPower{e}} -1.
$
Thus, when $e\equivMod{2}  1$, we have $\madicResidue{\tau} = 3^{\tau}-1 \equivMod{2} 0$.  Similarly, when $e\equivMod{2}  0$ and $\tau\equivMod{2} 0$, we have $\madicResidue{\tau} = 3^{\tau-1}-1 \equivMod{2} 0$. 
When $\tau \equivMod{2}1 \equivMod{2} e-1$, we arrive at the equality
$
\madicResidue{\tau} = 2\cdot3^{\tau-1} - 1.
$

For the $\graded{2}$-remainder, we begin by writing
\[
\ladicResidue{\tau} \equivMod{2^{e+\tau-1}} ND^{-1} 
 \equivMod{2^{e+\tau-1}}  \brackets{(2^e+1)3^{\tau-1} - 2^{e+\tau-1}}\resRepInv{-3^{\tau}}{} 
 \equivMod{2^{e+\tau-1}} 2^e\resRepInv{-3}{2^{\tau-1}} + \resRepInv{-3}{}.
\]
When $\tau \equivMod{2}1 \equivMod{2} e-1$, we have $\resRep{-3^1}{2^{\tau-1}}^{-1} = \frac{2^{\tau-1}-1}{3}$ and  $\resRep{-3^1}{2^{e+\tau-1}}^{-1} = \frac{2^{e+\tau-1}-1}{3}.$

As
\[
2^e\pfrac{2^{\tau-1}-1}{3} + \frac{2^{e+\tau-1}-1}{3}  = \frac{2\parentheses{2^{e+\tau-1}}-2^e-1}{3} < 2^{e+\tau-1},
\] we arrive at the chain of equalities
$
\ladicResidue{\tau} =2^e\pfrac{2^{\tau-1}-1}{3} + \frac{2^{e+\tau-1}-1}{3}= \frac{(2^{\tau}-1)2^e-1}{3}.
$
\end{proof}

\subsection{Weighted Binomial Coefficients}{}

The previous approach is apparently limited; it is unclear to the author how to extrapolate this approach to admissible sequences of order $\tau$ with an arbitrary $\graded{2}$-grading $\List{\ladicExponentSymbol}{0}{\tau-1}$.  In this subsection, we introduce a more robust approach to identifying the $3$-residues and $\graded{2}$-remainders of the iterates of an admissible cycle in a $(3,2)$-system.   Moreover, we do so by connecting the residues of $(3,2)$-systems to the well-known {\sl Fibonacci sequence} by way of elementary equivalence identities, which we establish first.

\begin{lemma}\label{lemma::multinomialModularIdentity}
For $a,b,z\in \naturals$, the equivalence
\[
\parentheses{\sum_{0\leq w < b}z^w}^a \equivMod{z^b} \sum_{0\leq w < b}{a-1+w\choose w} z^w
\]
holds.
\end{lemma}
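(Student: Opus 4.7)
The plan is to carry out the congruence inside the quotient ring $\integers[z]/(z^b)$ and exploit the elementary telescoping identity
\[
(1-z)\sum_{0\leq w < b} z^w = 1 - z^b \equivMod{z^b} 1.
\]
From this, $\sum_{0 \leq w < b} z^w$ is the multiplicative inverse of $(1-z)$ in $\integers[z]/(z^b)$ (the inverse exists since $1-z$ has constant term equal to a unit in $\integers$). Raising both sides to the $a$-th power, the left-hand side of the lemma becomes congruent to $(1-z)^{-a}$ modulo $z^b$.

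Next I would invoke the formal negative binomial expansion
\[
(1-z)^{-a} = \sum_{w \geq 0} {a-1+w \choose w} z^w,
\]
which is the coefficient identity that needs to be matched against the right-hand side. I would prove this either by induction on $a$, using Pascal's identity ${a-1+w \choose w} + {a-1+w \choose w-1} = {a+w \choose w}$ in the form of the hockey-stick summation
\[
\sum_{0\leq w \leq k}{a-1+w \choose w} = {a+k \choose k},
\]
or by induction on $b$ directly on the stated lemma, multiplying both sides of the inductive hypothesis by $\sum_{0\leq w < b} z^w$ and collecting coefficients of $z^k$ for $k<b$ via the same hockey-stick identity. Truncating the resulting series to terms of degree below $b$ produces the desired right-hand side.

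The principal obstacle, which is modest, is the bookkeeping associated with identifying $\sum_{0\leq w < b} z^w$ with $(1-z)^{-1}$ inside the truncation ring, together with the verification of the hockey-stick identity used to justify the negative binomial coefficient formula; both are standard but need to be stated cleanly so that the congruence modulo $z^b$ is preserved through the $a$-fold product. Once these ingredients are in place, the proof consists of a short chain of congruences in $\integers[z]/(z^b)$ with no further computation.
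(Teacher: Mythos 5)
Your proof is correct, but it takes a genuinely different route from the paper's. The paper proceeds by induction on $b$: writing $S_{b+1}(z)=zS_b(z)+1$, expanding $\brackets{S_{b+1}(z)}^a=\sum_y{a\choose y}z^y\brackets{S_b(z)}^y$, invoking the inductive hypothesis on each $\brackets{S_b(z)}^y$ (the extra factor $z^y$ with $y\geq 1$ is what upgrades the modulus from $z^b$ to $z^{b+1}$), and then collapsing the coefficient of $z^w$ via the Vandermonde--Chu identity $\sum_{0\leq y<w}{a\choose w-y}{w-1\choose y}={a-1+w\choose w}$. You instead observe that $\sum_{0\leq w<b}z^w$ is the inverse of $1-z$ in $\integers[z]/(z^b)$ and identify the left-hand side with the degree-$(<b)$ truncation of the negative binomial series $(1-z)^{-a}=\sum_{w\geq 0}{a-1+w\choose w}z^w$, whose verification reduces to the hockey-stick identity $\sum_{0\leq w\leq k}{a-1+w\choose w}={a+k\choose k}$ via induction on $a$. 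Your approach is more conceptual: it explains \emph{why} the coefficients ${a-1+w\choose w}$ appear, and it trades Vandermonde--Chu for the simpler hockey-stick identity; the paper's argument is more self-contained, avoiding the formal power-series and unit-group machinery. Two small points of care for your write-up: since the lemma asserts a congruence between integers (the paper takes $z\in\naturals$), you should note explicitly that a polynomial identity modulo the ideal $(z^b)$ in $\integers[z]$ yields the integer congruence upon evaluation at $z$; and your second suggested variant --- multiplying the inductive hypothesis by $\sum_{0\leq w<b}z^w$ --- is an induction on $a$, not on $b$ as you labelled it, since that multiplication increments the exponent $a$ while leaving $b$ fixed.
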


\begin{proof}

	Define $S_b(z) = \sum_{0\leq w<b} z^w$, and define $T_{a,b}(z) = \sum_{0\leq w < b}{a-1+w\choose w} z^w$.
	The proof is by induction on $b$.  
	
	When $b=1$, we arrive at the equivalence
	$
	1^a \equivMod{z} {a-1\choose 0}
	$ for all $a,z\in \naturals$.
	
	Assume the claim holds for $b\in \naturals$.  The identity $S_{b+1}(z) = zS_{b}(z)+1$ allows the chain of equivalences
\[
	\brackets{S_{b+1}(z)}^a  \equivMod{z^{b+1}} \sum_{0\leq y < b+1} {a\choose y} z^{y}\brackets{S_{b}(z)}^y 
	\equivMod{z^{b+1}} {a\choose 0}z^0 + \sum_{1\leq y < b+1} {a\choose y} z^{y} T_{y,b}(z) .
\] We will recast the coefficient of $z^0$ as ${a-1\choose 0}$, and we will write
	\[
	 \sum_{1\leq y < b+1} {a\choose y} z^{y} T_{y,b}(z) = \sum_{1\leq y < b+1} \sum_{0\leq u < b}z^{u+y} {a\choose y} {y-1+u\choose u} .
	\]
	For each $w\in \nOpenSet{b+1}$, the coefficient of $z^w$ is $\sum_{1\leq y \leq w} {{a\choose y}{w-1\choose w-y}} = \sum_{0\leq y < w} {{a\choose w-y}{w-1\choose y}}$, which equals
	$
	{a-1+w \choose w}
	$ as per the{\sl Vandermonde-Chu} identity.
\end{proof}

\begin{identity}[Fibonacci Identity]\label{identity::fibonacci}
Let $\fibonacci{0}=0$, $\fibonacci{1}=1$, and $\fibonacci{n}=\fibonacci{n-1}+\fibonacci{n-2}$ for $n\geq 2$.  The equality
$
\fibonacci{n} = \sum_{0\leq k < n}{n-1-k\choose k}
$ holds.
\end{identity}

We will use these identities to establish the {\sl remainder approximation functions}.

\begin{lemma}
Define the map $\madicResidueApproximation{\tau}: \naturals^{\tau}\times\naturals^{\tau} \to \integers$ to be 
\[
\madicResidueApproximation{\tau} = \madicResidueApproximation{\tau}\left(\graded{e},\graded{a}\right) = \sum_{0\leq w < u}(-)^{\leps{w+1}}3^{w}\translationValue{w}\sum_{0\leq y < \tau-w}{\leps{w+1}-1+y\choose y}3^y,
\] and define  the map $\ladicResidueApproximation{\tau}: \naturals^{\tau}\times\naturals^{\tau} \to \integers$ to be
\[
\ladicResidueApproximation{\tau} = \ladicResidueApproximation{\tau}\left(\graded{e},\graded{a}\right) =  \sum_{0\leq w < \tau}(-)^{w}2^{\less{w}}\translationValue{\tau-1-w}\sum_{0\leq y < \eta_{w} }{w+y\choose y}4^y,
\] where
$
\eta_{w}= \ceilfrac{\leps{\tau-w}}{2}.
$  

Then, the equivalences 
$
\madicResidueApproximation{\tau} \equivMod{3^{\tau}}  \madicResidue{\tau}
$ and
$
\ladicResidueApproximation{\tau}   \equivMod{2^{\less{\tau}}} \ladicResidue{\tau}
$  hold.

\end{lemma}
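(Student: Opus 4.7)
The plan is to recognize both $\madicResidueApproximation{\tau}$ and $\ladicResidueApproximation{\tau}$ as explicit truncations of the rational identity $\iterate{\max} = N/D$, where $D = 2^{\less{\tau}} - 3^{\tau}$ and $N = \sum_{0 \leq w < \tau} \translationValue{w}\, 3^w\, 2^{\less{\tau} - \leps{w+1}}$ are furnished by the B\"ohm--Sontacchi presentation of an admissible cycle (see \cite{BohmSontacchi78} and \cite{Rukhin18}). The two elementary reductions that drive the computation are $D \equivMod{3^{\tau}} 2^{\less{\tau}}$ and $D \equivMod{2^{\less{\tau}}} -3^{\tau}$, so inverting $D$ in each ring turns the equation $\iterate{\max}\,D = N$ into
\begin{align*}
\madicResidue{\tau} &\equivMod{3^{\tau}} \sum_{0 \leq w < \tau}\translationValue{w}\, 3^w\, 2^{-\leps{w+1}},\\
\ladicResidue{\tau} &\equivMod{2^{\less{\tau}}} -3^{-\tau}\, N.
\end{align*}

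My remaining task is to check that the inner binomial sums defining $\madicResidueApproximation{\tau}$ and $\ladicResidueApproximation{\tau}$ are exactly the truncations of $2^{-\leps{w+1}}$ and $3^{-(w+1)}$ needed to realize these congruences. For the $3$-adic side I would begin from the identity $2\sum_{0 \leq y < b} 3^y = 3^b - 1$, which gives $2^{-1} \equivMod{3^b} -\sum_{0 \leq y < b} 3^y$; raising this to the $a$-th power and applying Lemma \ref{lemma::multinomialModularIdentity} with $z=3$ produces
\[
2^{-a} \equivMod{3^b} (-1)^a \sum_{0 \leq y < b}{a-1+y \choose y}\, 3^y.
\]
Substituting $a = \leps{w+1}$ and $b = \tau - w$, and then premultiplying by $\translationValue{w}\,3^w$, reproduces the $w$-th summand of $\madicResidueApproximation{\tau}$; the prefactor $3^w$ lifts the modulus to $3^{\tau}$, so summing over $w$ yields $\madicResidueApproximation{\tau} \equivMod{3^{\tau}} \madicResidue{\tau}$.

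The $2$-adic computation proceeds in parallel via the identity $-3 = 1 - 4$, so truncating the binomial series gives $(-3)^{-a} \equivMod{4^b} \sum_{0 \leq y < b} {a-1+y \choose y}\, 4^y$. Choosing $a = w+1$ and $b = \eta_w = \ceilfrac{\leps{\tau-w}}{2}$ yields a $2$-adic precision of $4^{\eta_w} = 2^{2\eta_w} \geq 2^{\leps{\tau-w}}$; combined with the external factor $2^{\less{w}}$ and the telescoping identity $\less{w} + \leps{\tau-w} = \less{\tau}$, this means each summand of $\ladicResidueApproximation{\tau}$ is pinned down modulo $2^{\less{\tau}}$. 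Reindexing by $u = \tau - 1 - w$, collecting the signs $(-)^w$ and $(-)^{w+1}$, and using $\less{\tau-1-u} = \less{\tau} - \leps{u+1}$ collapses the sum to $-3^{-\tau} N$ modulo $2^{\less{\tau}}$, which matches $\ladicResidue{\tau}$ by the reduction above.

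The main obstacle I anticipate is purely bookkeeping: juggling the two indexing conventions ($w$ versus $u = \tau - 1 - w$), tracking the sign $(-)^{\leps{w+1}}$ in the $3$-adic case against $(-)^w$ in the $2$-adic case, and confirming that the ceiling in the definition of $\eta_w$ supplies exactly enough binomial terms to determine each summand modulo $2^{\less{\tau}}$ without wasteful over-precision. Once these alignments are carried out both congruences drop out of Lemma \ref{lemma::multinomialModularIdentity} in an essentially uniform manner.
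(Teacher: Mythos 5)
Your proposal is correct and follows essentially the same route as the paper: expand $ND^{-1}$ over the B\"ohm--Sontacchi numerator, reduce $D^{-1}$ to $\bracketsInverse{2^{\less{\tau}}}$ modulo $3^{\tau}$ and to $\bracketsInverse{-3^{\tau}}$ modulo $2^{\less{\tau}}$, and apply Lemma \ref{lemma::multinomialModularIdentity} with $z=3$ and $z=4$ to the geometric-series inverses. Your explicit precision bookkeeping (the prefactors $3^{w}$ and $2^{\less{w}}$ lifting the truncated inner sums to the full moduli via $\leps{w+1}+\less{\tau-1-w}=\less{\tau}$ and $\less{w}+\leps{\tau-w}=\less{\tau}$) is exactly what the paper leaves implicit.
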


\begin{proof}
We will make use of the following elementary identities involving {\sl Euler's totient function}  $\phi$\/: we have 
$
3^{\eulerPhi{2}}-1 = 2
$ and 
$
2^{\eulerPhi{3}}-1 = 3.
$
In light of these identities, we will appeal to Lemma \ref{lemma::multinomialModularIdentity}:  for $a,b\in \naturals$, we will  write
\[
\bracketsInverse{2^a} \equivMod{3^b} \pfrac{1 - 3^{\eulerPhi{2}\ceilfrac{b}{\eulerPhi{2}}}}{2}^a  \equivMod{3^b} \parentheses{-}^a\parentheses{\sum_{0\leq y < b}3^y}^a \equivMod{3^b}  \parentheses{-}^a\sum_{0\leq y < b}{a-1+y\choose y}3^y,
\] and 
\[
\bracketsInverse{3^b} \equivMod{2^a} \pfrac{1 - 2^{\eulerPhi{3}\ceilfrac{a}{\eulerPhi{3}}}}{3}^b  \equivMod{2^a} \parentheses{-}^b\parentheses{\sum_{0\leq y < \ceilfrac{a}{2}}4^y}^b \equivMod{2^a}  \parentheses{-}^b\sum_{0\leq y < \ceilfrac{a}{2}}{b-1+y\choose y}4^y.
\] 

	We derive the 3-remainder approximation function as follows:
\[
	 	\madicResidue{\tau} \equivMod{3^{\tau}} \resRep{\numerator{}\denominator{}^{-1}}{3^{\tau}}  \equivMod{3^{\tau}} \sum_{0\leq w < \tau} 3^w2^{\less{\tau-1-w}}\translationValue{w}\bracketsInverse{2^{\less{\tau}}} 
		\equivMod{3^{\tau}} \sum_{0\leq w < \tau}(-)^{\leps{w+1}}3^{w}\translationValue{w}\sum_{0\leq y < \tau-w }{\leps{w+1}-1+y\choose y}3^y.
\]

  We derive the \graded{2}-remainder approximation function analogously:
\[
	 	\ladicResidue{\tau}  \equivMod{2^{\less{\tau}}} \sum_{0\leq w < \tau} 3^w2^{\less{\tau-1-w}}\translationValue{w}\bracketsInverse{-3^{\tau}} 
		\equivMod{2^{\less{\tau}}} \sum_{0\leq w < \tau}(-)^{w}2^{\less{w}}\translationValue{\tau-1-w}\sum_{0\leq y < \eta_w }{w+y\choose y}4^y.
\]
$\hspace{20pt}$
\end{proof}
It will prove useful to re-index these double-sums: for example, in the $3$-residue approximation, for each fixed $w\in \nZeroOpenSet{\tau}$ the coefficient of $3^w$ is 
\[
S_w = \sum_{0\leq y \leq w}(-)^{\leps{y+1}}{\leps{y+1}-1+w-y\choose w-y}\translationValue{y};
\] thus, we can write
$
\madicResidueApproximation{\tau} = \sum_{0\leq w < \tau}3^wS_w.
$ 

The following example illustrates the  connection between an orbit over $\naturals$ within the $3x+1$ dynamical system and the Fibonacci Sequence.

\subsubsection{Example: The $(1,4,2)$-Orbit in the $3x+1$ Dynamical System}

For this example, define $\exponent{y} = 2$ and $\translationValue{y} = 1$ for each $y\in \nZeroOpenSet{\tau}$; thus, the  sum $\leps{y+1} = 2(y+1)  \equivMod{2} 0$.  We can express the $3$-remainder approximation as
$
\madicResidueApproximation{\tau} = \sum_{0\leq w < \tau} 3^wS_w,
$ where
\[
S_w := \sum_{0\leq y \leq w}\negPower{2(y+1)}{2(y+1)-1 +w-y\choose w-y} = \sum_{0\leq y \leq w}{2w+1-y\choose y}.
\]
The sequence $\parentheses{S_w}_{w\geq 0}$ is the even-indexed bisection of the Fibonacci sequence $\parentheses{F_w}_{w\geq 0}$ as per Identity \ref{identity::fibonacci};  we have
$
S_w = F_{2(w+1)}
$ for $w\geq 0$.
It is known\footnote{OEIS:A001906} that this bisection
satisfies the recurrence\footnote{We assume the standard definition  $\fibonacci{-u} = (-)^{u-1}\fibonacci{u}$ for $u\in\naturals$.}
$
F_{2w} = 3F_{2(w-1)} - F_{2(w-2)}
$ for $w\geq 0$;  thus, we will write $\madicResidueApproximation{\tau} = \sum_{0\leq w < \tau}3^wS_w
= \sum_{0\leq w < \tau}3^w\fibonacci{2(w+1)}$, and we continue by writing
\[
\sum_{0\leq w < \tau}3^w\brackets{3\fibonacci{2w} - \fibonacci{2(w-1)}}
=  \sum_{0\leq w < \tau-1}3^{w+1}\fibonacci{2w} + 3^{\tau}\fibonacci{2(\tau-1)}- \fibonacci{-2} - \sum_{1\leq w < \tau}3^w\fibonacci{2(w-1)} 
= 3^{\tau}F_{2(\tau-1)} + 1.
\]


For  the \graded{2}-remainder approximation, we have the equalities
$
\ladicResidueApproximation{\tau} =  \sum_{0\leq w < \tau} 4^w\sum_{0\leq y \leq w}{w\choose y}(-1)^y = \sum_{0\leq w < \tau} 4^w(1-1)^w = 1
$ for $\tau\in \naturals$.

The Fibonacci sequence appears within the $\graded{2}$-remainder approximation for the following proof of Theorem \ref{theorem::threeEcksPlusOneResidues}.   In order to expedite the derivation of this \graded{2}-remainder, we will first prove the following lemma.

\begin{lemma}\label{lemma::FibonacciIdentity}
For $a\in \naturals_0$,   let $\fibonacci{a}$ denote the $a$-th Fibonacci number; furthermore, for $k\in \naturals_0$, define $\twoSummandTerm{a}{k} = 2{a+1\choose k} - {a\choose k}$, and define $\twoSumTerm{k} = \sum_{0\leq i < k}\twoSummandTerm{2k-i}{i+1}$. 

For $k\in \naturals_0$, the equality
$
	\twoSumTerm{k} = \fibonacci{2k+2} + 2\fibonacci{2k+1}-3
$ holds.
\end{lemma}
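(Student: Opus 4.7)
The plan is to unpack $\twoSumTerm{k}$ into two separate binomial sums, re-index each so that the upper binomial index is of the form $N - j$ and the lower index is $j$, and then recognize the resulting sums as (truncated) Fibonacci expansions via Identity \ref{identity::fibonacci}. Concretely, I would first write
\[
\twoSumTerm{k} = 2\sum_{0\leq i < k}\binom{2k-i+1}{i+1} \;-\; \sum_{0\leq i < k}\binom{2k-i}{i+1},
\]
and substitute $j = i+1$ in each sum so that the ranges become $1\leq j \leq k$ with terms $\binom{2k+2-j}{j}$ and $\binom{2k+1-j}{j}$ respectively.

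Next I would apply Identity \ref{identity::fibonacci}, which gives $\fibonacci{n} = \sum_{0\leq j < n}\binom{n-1-j}{j}$. Applied with $n = 2k+3$ and $n = 2k+2$, this yields
\[
\fibonacci{2k+3} = \sum_{0\leq j \leq k+1}\binom{2k+2-j}{j}, \qquad \fibonacci{2k+2} = \sum_{0\leq j \leq k}\binom{2k+1-j}{j},
\]
after dropping the terms in which the upper binomial index is strictly smaller than the lower one. The key bookkeeping step is to peel off the boundary contributions: from the first Fibonacci sum I subtract the $j=0$ term (equal to $1$) and the $j=k+1$ term $\binom{k+1}{k+1} = 1$; from the second I subtract only the $j=0$ term.

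This converts the two re-indexed sums into $\fibonacci{2k+3} - 2$ and $\fibonacci{2k+2} - 1$, respectively, giving
\[
\twoSumTerm{k} = 2(\fibonacci{2k+3} - 2) - (\fibonacci{2k+2} - 1) = 2\fibonacci{2k+3} - \fibonacci{2k+2} - 3.
\]
The final step is a one-line application of the Fibonacci recurrence $\fibonacci{2k+3} = \fibonacci{2k+2} + \fibonacci{2k+1}$, which collapses the expression to $\fibonacci{2k+2} + 2\fibonacci{2k+1} - 3$, as required.

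The only subtle point, and hence the main obstacle, is the careful accounting of boundary terms when invoking Identity \ref{identity::fibonacci}: one must verify precisely which $j$-values contribute nonzero binomial coefficients (i.e., those with $2k+2-j \geq j$ for the first sum and $2k+1-j \geq j$ for the second), and then correctly subtract the contributions at $j=0$ and at the upper endpoint. Everything else is purely mechanical re-indexing and a single application of the Fibonacci recurrence.
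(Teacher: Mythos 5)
Your proof is correct and follows essentially the same route as the paper's: re-index with $j=i+1$, identify each sum as a Fibonacci number via Identity \ref{identity::fibonacci} minus its boundary terms at $j=0$ and $j=k+1$, and finish with the recurrence $\fibonacci{2k+3}=\fibonacci{2k+2}+\fibonacci{2k+1}$. The boundary-term bookkeeping you flag as the only subtle point is exactly what the paper's displayed computation does.
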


\begin{proof}  Assume the conditions within the statement of the lemma.  For $k=0$, we have $\twoSumTerm{k} = 0 = \fibonacci{2} + 2\fibonacci{1}-3$. When $k>0$, we will write
\begin{alignat}{2}
\twoSumTerm{k} &= \sum_{0\leq i < k}\brackets{2{2k-i+1\choose i+1} - {2k-i\choose i+1}} \notag\\
&=\sum_{1\leq i < k+1}\brackets{2{2k+2-i\choose i} - {2k+1-i\choose i}} \notag\\
&=2\brackets{\fibonacci{2k+3}-{2k+2\choose 0} - {k+1\choose k+1}} - \brackets{\fibonacci{2k+2}-{2k+1\choose 0}}\notag\\
&= \fibonacci{2k+2} + 2\fibonacci{2k+1}-3.\notag
\end{alignat}

\end{proof}
We proceed with the proof of the theorem.

\begin{proof}

First, we will  demonstrate the equality
$
\madicResidueApproximation{\tau} = -1 + 3^{\tau-1}\negPower{\tau-1}\brackets{1 + \negPower{e}};
$ afterwards, when assuming $\tau \equivMod{2} 1 \equivMod{2} e-1$, we will show that
\[
\ladicResidueApproximation{\tau} = 2^e\pfrac{2^{\tau-1}-1}{3} + \frac{2^{e+\tau-1}-1}{3} + 2^{e+\tau-1}\parentheses{\fibonacci{\tau-2}-1}.
\]

 In circuits, we have
\begin{equation}
	\leps{w} = 
	\begin{cases}
			w & w < \tau \\
			e+\tau-1 & w = \tau,
	\end{cases} \notag
\end{equation}
for $w \in \nOpenSet{\tau}$. Thus, when $w < \tau-1$, we have 
\begin{alignat}{2}
S_w &= \sum_{0\leq y \leq w}(-)^{\leps{y+1}}{\leps{y+1}- 1 + w-y\choose w-y} \notag \\
&= \sum_{0\leq y \leq w}(-)^{y+1}{w\choose w-y} \notag \\
&= -\sum_{0\leq y \leq w} (-)^{w-y}{w\choose y} \notag\\
&= -(1-1)^w\notag\\
&= \begin{cases}
	0 & w > 0\\
	-1 & w = 0.
\end{cases}; \notag
\end{alignat} when $w=\tau-1 \geq 1$, we have
\begin{alignat}{2}
S_{\tau-1} &= \sum_{0\leq y \leq \tau-1}(-)^{\leps{y+1}}{\leps{y+1}- 1 + \tau-1-y\choose \tau-1-y} \notag \\
&= \sum_{0\leq y \leq \tau-2}(-)^{y+1}{\tau-1\choose \tau-1-y} +  (-)^{e+\tau-1}{e+\tau-2\choose 0}\notag \\
&= -(1-1)^{\tau-1} + (-)^{\tau-1}{\tau-1\choose \tau-1} +  (-)^{e+\tau-1}{e+\tau-2\choose 0} \notag\\
&=\negPower{\tau-1}\brackets{1 + \negPower{e}}. \notag
\end{alignat}
It follows that
$
\madicResidueApproximation{\tau} = -1 + 3^{\tau-1}\negPower{\tau-1}\brackets{1 + \negPower{e}}.
$
Thus, when $e\equivMod{2}  1$, we have $\madicResidue{\tau} = 3^{\tau}-1$.  Similarly, when $e\equivMod{2}  0$ and $\tau\equivMod{2} 0$, we have $\madicResidue{\tau} = 3^{\tau-1}-1$.

When $\tau \equivMod{2}1 \equivMod{2} e-1$, we arrive at the equality
$
\madicResidue{\tau} = 2\cdot3^{\tau-1} - 1.
$  
Continuing with these parity conditions, we let $T_w$ denote the sum $\sum_{0\leq y < \ceilfrac{\leps{\tau-w}}{2}} {w+y\choose y}4^y$.
We write
\begin{alignat}{2}
\ladicResidueApproximation{\tau} &= \sum_{0\leq w < \tau} (-)^w2^{\less{w}} T_w\notag\\
&= T_0 + \sum_{1\leq w < \tau} (-)^w2^{\less{w}} T_w\notag\\
&= \sum_{0\leq y < \frac{e+\tau-1}{2}}{y\choose y}4^y + \sum_{1\leq w < \tau} (-)^w2^{\less{w}}{w\choose 0} +  \sum_{1\leq w < \tau} (-)^w2^{\less{w}}\brackets{T_w - {w\choose 0}}.\notag
\end{alignat}
We proceed with the first two sums in the final expression.   When $e+\tau-1\equivMod{2} 0$, we will write
\[
T_0 = \sum_{0\leq y < \frac{e+\tau-1}{2}}{y\choose y}4^y = \frac{2^{e+\tau-1}-1}{3}.
\] In circuits, we have
$\less{w} = e + w-1$ for $w \in \nOpenSet{\tau}$; thus, when $\tau-1\equivMod{2} 0$, we will also write
\begin{alignat}{2}
\sum_{1\leq w < \tau} (-)^w2^{\less{w}}{w\choose 0}  &\equivMod{2^{e+\tau-1}} 2^e\sum_{0\leq w < \tau-1} (-)^{w+1}2^{w} \notag\\
&\equivMod{2^{e+\tau-1}} 2^e\sum_{0\leq w < \frac{\tau-1}{2}} \brackets{2^{2w+1} - 2^{2w}} \notag\\
&\equivMod{2^{e+\tau-1}} 2^e\sum_{0\leq w < \frac{\tau-1}{2}} 4^w \notag\\
&\equivMod{2^{e+\tau-1}} 2^e\pfrac{2^{\tau-1}-1}{3}. \notag
\end{alignat} What remains to be shown is that 
$
\sum_{1\leq w < \tau} (-)^w2^{\less{w}}\brackets{T_w-{w\choose 0}} \equivMod{2^{e+\tau-1}} 0.
$
To this end, for each $k\in \naturals_0$, we will define 
$$\twoTailSum{2k+1} =  \sum_{1\leq w < 2k-1} (-)^w2^{w-1}\sum_{1\leq y < \ceilfrac{2k+1-w}{2}}{w+y\choose y}4^y;$$
we will show that
\[
 \sum_{1\leq w < \tau} (-)^w2^{\less{w}}\brackets{T_w-{w\choose 0}}  = 2^{e}\twoTailSum{\tau} =2^{e+\tau-1}\parentheses{\fibonacci{\tau-2} - 1}.
\] 

Assume the notation from the statement of Lemma \ref{lemma::FibonacciIdentity}. We will  demonstrate the chain of equalities 
\begin{equation}\label{equation::tailSumInductiveClaim}
\twoTailSum{2k+1} = \twoTailSum{2k-1}+ 4^{k-1}\twoSumTerm{k-1}  = 4^{k}\parentheses{\fibonacci{2k-1}-1}\notag
\end{equation} inductively for $k\in \naturals$.  Firstly, we have $\twoTailSum{3} = 0 + 4^0\twoSumTerm{0} = 4^{0}\parentheses{\fibonacci{1}-1} = 0$ for $k=1$.  
Assuming the inductive claim, we proceed with the chain of equalities for $k\geq 2$: 
\[
\twoTailSum{2k+1} =  \sum_{1\leq w < 2k-1} (-)^w2^{w-1}\sum_{1\leq y < \ceilfrac{2k+1-w}{2}}{w+y\choose y}4^y 
= \twoTailSum{2k-1}+ A_k,
\]
where
\[
A_k =\sum_{1\leq w < 2k-1} (-)^w2^{w-1}{w+\ceilfrac{2k-1-w}{2}\choose \ceilfrac{2k-1-w}{2}}4^{\ceilfrac{2k-1-w}{2}}.  
\]  The sum
\begin{alignat}{2}
A_k &= \sum_{1\leq w < 2k-1} (-)^w2^{w-1}{k+w+\ceilfrac{-1-w}{2}\choose k+\ceilfrac{-1-w}{2}}4^{k+\ceilfrac{-1-w}{2}}\notag\\
&= \sum_{1\leq w < \frac{2k-1}{2}} \brackets{2^{2w-1}{k+w\choose k-w} - 2^{2w-2}{k-1+w\choose k-w} }4^{k-w}\notag\\
&= 4^{k-1}\sum_{1\leq w < k} \brackets{2{k+w\choose k-w} - {k-1+w\choose k-w} }\notag\\
&= 4^{k-1}\sum_{1\leq w < k} \brackets{2{2k-w\choose w} - {2k-1-w\choose w} }\notag\\
&=  4^{k-1}\sum_{0\leq w <k-1} \brackets{2{2k-1-w\choose w+1} - {2k-2-w\choose w+1} }\notag\\
&= 4^{k-1}\twoSumTerm{k-1}.\notag
\end{alignat}

Thus, with Lemma \ref{lemma::FibonacciIdentity} and the inductive hypothesis, we can write
\[
\twoTailSum{2k+1} = \twoTailSum{2k-1} + 4^{k-1}\twoSumTerm{k-1} 
= 4^{k-1}\brackets{\fibonacci{2k-3} + \fibonacci{2k-2} + 3\fibonacci{2k-1}-4}
=4^{k}\brackets{\fibonacci{2k-1}-1}
\]as required.
Consequently, when $\tau \equivMod{2} 1 \equivMod{2}e-1$, the $\graded{2}$-remainder approximation 
$$\ladicResidueApproximation{\tau} = 2^e\pfrac{2^{\tau-1}-1}{3} + \frac{2^{e+\tau-1}-1}{3} + 2^{e+\tau-1}\parentheses{\fibonacci{\tau-2}-1} \equivMod{2^{e+\tau-1}}2^e\pfrac{2^{\tau-1}-1}{3} + \frac{2^{e+\tau-1}-1}{3}.$$ 
\hspace{20pt}
\end{proof}

Note that the approach within this subsection exploits the serendipitous pair of identities
$
3^{\eulerPhi{2}}-1 = 2
$ and 
$
2^{\eulerPhi{3}}-1 = 3.
$  
In general, Euler's Theorem allows one to write 
$
m^{\eulerPhi{l}}-1 = \resRepInv{-l}{m^{\eulerPhi{l}}}l,
$ and 
$
l^{\eulerPhi{m}}-1 =  \resRepInv{-m}{l^{\eulerPhi{m}}}m;
$ however, for arbitrary, coprime $m$ and $l$ exceeding $1$, the terms $\resRepInv{-l}{m^{\eulerPhi{l}}}$ and  $ \resRepInv{-m}{l^{\eulerPhi{m}}}$ may prevent one from executing the approach above in an analogous manner.

\subsection{Dual-Radix Modular Division}

The approach in this section, based on the work in \cite{Rukhin18},  demonstrates a different method of proving Theorem \ref{theorem::threeEcksPlusOneResidues} using {\sl dual-radix modular division}.

\begin{proof}
Under the assumption that
\begin{equation}
	\lExp{w} = 
		\begin{cases}
			1 & w \in \nZeroOpenSet{\tau-1}\\
			e & w = \tau-1,
		\end{cases}\notag
\end{equation} we have the following initial conditions for the recurrence in Theorem 4.4 in \cite{Rukhin18}.  For   $v\in \nZeroOpenSet{\tau}$, the  $3$-adic digit  $\madicDigit{v,0} \equivMod{3}\bracketsInverse{2^{\lExp{v}}}$; thus, we have
\begin{equation}
	\madicDigit{v,0} = 
		\begin{cases}
			2 & v \in \nZeroOpenSet{\tau-1}\\
			1 + e\bmod 2 & v = \tau-1;
		\end{cases}\notag
\end{equation} furthermore, the  $\graded{2}$-adic digit $\ladicDigit{v,0} \equivMod{2^{\lExp{v-1}}}\bracketsInverse{-3}$; thus, we have
\begin{equation}
	\ladicDigit{v,0}= 
		\begin{cases}
			\frac{2^{2\ceilfrac{e}{2}}-1}{3} & v = 0\\
			 1 & v \in \nSet{\tau-1}.
		\end{cases}\notag
\end{equation}
For $u>0$, the equivalences
\[
\madicDigit{v,u} \equivMod{3} \bracketsInverse{2^{\lExp{v}}}\brackets{\madicDigit{v+1,u-1} - \ladicDigit{v+u,u-1}}
\] and
\[
\ladicDigit{v,u} \equivMod{2^{\lExp{v-1-u}}} \bracketsInverse{-3}\brackets{\madicDigit{v-u,u-1} - \ladicDigit{v-1,u-1}}
\] yield, by induction on $u$, the equalities
$
\madicDigit{v,u} = 2[2-1] = 2
$ for $v< \tau-1-u$, and
$
\ladicDigit{v,u} = 1[2-1] = 1
$ for $v> u$.

Firstly, we will identify the $3$-adic digits of the $3$-remainder of $\iterate{0} =\iterate{\max}$.
When $e \equivMod{2} 1$, we have the initial condition $\madicDigit{\tau-1,0} = 2$. Thus, for $u\in \nOpenSet{\tau}$, the digit
$
\madicDigit{\tau-1-u, u} \equivMod{3} \bracketsInverse{2^{\lExp{\tau-1-u}}}\brackets{\madicDigit{\tau-u, u-1} - \ladicDigit{\tau-1, u-1}}
\equivMod{3} 2\brackets{2-1} 
\equivMod{3} 2, 
$ and thus we have $\madicDigit{0,\tau-1} = 2.$
Consequently, we have
$
\madicResidue{\tau} =  \sum_{0\leq w < \tau} 3^w\madicDigit{0, w }  = 3^{\tau}-1.
$

When $e \equivMod{2}0$, we have the initial condition $\madicDigit{\tau-1, 0} = 1$, and $\madicDigit{\tau-2,1} \equivMod{3} \bracketsInverse{2^1}\brackets{\madicDigit{\tau-1,0} - \ladicDigit{\tau-1,0}}\equivMod{3} \bracketsInverse{2^1}\brackets{1-1}\equivMod{3}0.$ 
By induction, for $u\in \nOpenSet{\tau}$ where  $u\equivMod{2}0$, the digit 
$$
\madicDigit{\tau-1-u,u} \equivMod{3} \bracketsInverse{2^{\lExp{ \tau-1-u}}}\brackets{\madicDigit{\tau-u, u-1} - \ladicDigit{\tau-1, u-1}}
\equivMod{3} 2\brackets{0-1} 
\equivMod{3} 1. 
$$
For  $u\equivMod{2}1$, the digit
$\madicDigit{\tau-1-u,u} \equivMod{3} \bracketsInverse{2^{\lExp{ \tau-1-u}}}\brackets{\madicDigit{\tau-u, u-1} - \ladicDigit{\tau-1, u-1}}
\equivMod{3} 2\brackets{1-1}
\equivMod{3} 0. 
$  Thus, the digit $\madicDigit{0,\tau-1} = \tau \bmod 2.
$  Thus, when $\tau \equivMod{2} 0$, the $3$-adic remainder
$
\madicResidue{\tau} = \sum_{0\leq w < \tau-1} 3^w(2) + 3^{\tau-1}(0) = 3^{\tau-1}-1;
$  and, when $\tau \equivMod{2} 1$, the $3$-adic residue
$
\madicResidue{\tau} =  \sum_{0\leq w < \tau-1} 3^w(2) + 3^{\tau-1}(1)  = 2\cdot3^{\tau-1}-1. 
$

We will now determine the $\graded{2}$-adic digits of $\iterate{} $ when $\tau\equivMod{2} 1\equivMod{2} e-1$:  the initial \graded{2}-adic digit 
$
\ladicDigit{0, 0} = \frac{2^e-1}{3},
$ and the digit
$
\ladicDigit{0,1} \equivMod{2^{\lExp{\tau-2}}} \bracketsInverse{-3}\brackets{\madicDigit{\tau-1, 0} - \ladicDigit{\tau-1, 0}}  \equivMod{2^{1}} (1)\cdot\brackets{1-1}  \equivMod{2^{1}} 0.
$  For $u\in \nOpenSet{\tau}$ where  $u\equivMod{2}0$, we have
$
\ladicDigit{0, u} \equivMod{2^{\lExp{\tau-1-u}}} \bracketsInverse{-3}\brackets{\madicDigit{\tau-u, u-1} - \ladicDigit{\tau-1, u-1}}  \equivMod{2^{1}} (1)\cdot\brackets{0-1}  \equivMod{2^{1}} 1, 
$ and, when $u\equivMod{2}1$, we have
$
\ladicDigit{0, u} \equivMod{2^{\lExp{\tau-1-u}}} \bracketsInverse{-3}\brackets{\madicDigit{\tau-u, u-1} - \ladicDigit{\tau-1, u-1}}  \equivMod{2^{1}} (1)\cdot\brackets{1-1}  \equivMod{2^{1}} 0. 
$ Thus, when $\tau \equivMod{2}1 \equivMod{2}e-1$, the \graded{2}-adic remainder
\begin{alignat}{2}
\ladicResidue{\tau}  &= \ladicDigit{0, 0} + \sum_{1\leq u < \tau}2^{\less{u}}\ladicDigit{0,u} \notag\\
&= \frac{2^e-1}{3} + 2^e\sum_{2\leq u < \tau}2^{u-1}[u \equivMod{2} 0] \notag \\
&= \frac{2^e-1}{3} + 2^{e+1}\sum_{0\leq u < \tau-2}2^{u}[u \equivMod{2} 0] \notag \\
&= \frac{2^e-1}{3} + 2^{e+1}\sum_{0\leq u \leq \frac{\tau-3}{2}}4^{u} \notag \\
&= \frac{2^e-1}{3} + 2^{e+1}\pfrac{4^{\frac{\tau-1}{2}}-1}{3} \notag\\
&= 2^e\pfrac{2^{\tau-1}-1}{3} + \frac{2^{e+\tau-1}-1}{3} .\notag
\end{alignat} 
\hspace{20pt}
\end{proof}

\subsection{Circuits in the $3x-1$ Dynamical System}

We conclude this article by applying the previous analyses to the $3x-1$ dynamical system; now, we will consider the case where $\translationValue{w} = -1$ for all $w\in \nZeroOpenSet{\tau}$.  

We will extend the argument in \cite{BelagaMignotte} to the case where $3^{\tau}>2^{\less{\tau}}:$  the magnitude of the numerator of a maximal iterate in a periodic orbit can be bound from above as follows:
\[
\left|\parentheses{2^{e}+1}3^{\tau-1} - 2^{\less{\tau}}\right| = 3^{\tau}\brackets{\frac{2^e+1}{3} - \frac{2^{\less{\tau}}}{3^{\tau}}} < 3^{\tau-1}\parentheses{2^{e}+1}.
\] We can bound the denominator $3^{\tau} - 2^{\less{\tau}}$ from below by appealing to the inequality (\ref{rhinInequality}) once again
to conclude that the maximal iterate \iterate{\max} within a periodic orbit  in the $3x-1$ dynamical system satisfies the inequality
\[
\iterate{\max} < \frac{\frac{2^e+1}{3}}{1-\frac{2^{e+\tau-1}}{3^{\tau}}} < \pfrac{2^e+1}{3}2\parentheses{e+\tau-1}^{13.3} = o(2^{e+\tau-1})
\] for any fixed $e\in \naturals$.  Thus, we will reuse the notation of  the previous section and begin with the following assumptions.
  
\begin{assumptions}[\arabic{chapter}.\arabic{section}.\arabic{assumptionCounter}]\label{hypotheses::threeEcksMinusOneResiduesOfMaxIterate} Assume \ref{hypotheses::threeEcksPlusOneResiduesOfMaxIterate}, except that now we assume that $N = 2^{e+\tau-1} - (2^e+1)3^{\tau-1}$, and  $D = 2^{e+\tau-1} - 3^{\tau} <0$.

As before, define $\madicResidue{\tau} = ND^{-1} \bmod 3^{\tau}$ and $\ladicResidue{\tau} = ND^{-1} \bmod 2^{e+ \tau -1}.$
\end{assumptions}


Our goal for the remainder of this subsection is to prove the following theorem:

\begin{theorem}\label{theorem::ThreeEcksMinusOneResidues}
Assume (\ref{hypotheses::threeEcksMinusOneResiduesOfMaxIterate}). 

The $3$-remainder
\begin{equation}
\madicResidue{\tau} = 
	\begin{cases}
		2\cdot 3^{\tau-1}+1 & e\equivMod{2} 0\\
		1& e\equivMod{2} 1
	\end{cases}\notag
\end{equation} when $\tau \equivMod{2} 0$, and
\begin{equation}
\madicResidue{\tau} = 
	\begin{cases}
		3^{\tau-1}+1 & e\equivMod{2} 0 \\
		1& e\equivMod{2} 1 
	\end{cases}\notag
\end{equation} when $\tau \equivMod{2} 1$.  

The $\graded{2}$-remainder
\begin{equation}
\ladicResidue{\tau} = 
	\begin{cases}
		\frac{2^e\parentheses{2^{\tau}+1}+1}{3} & e\equivMod{2} 0\\
		\frac{2^e+1}{3}& e\equivMod{2} 1
	\end{cases}\notag
\end{equation} when $\tau \equivMod{2} 0$, and
\begin{equation}
\ladicResidue{\tau} = 
	\begin{cases}
		\frac{2^e\parentheses{2^{\tau-1}+1}+1}{3} & e\equivMod{2} 0 \\
		\frac{2^e+1}{3}& e\equivMod{2} 1 
	\end{cases}\notag
\end{equation} when $\tau \equivMod{2} 1$. 
\end{theorem}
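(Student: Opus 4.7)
The plan is to mirror the elementary modular arithmetic argument used in Section 3.1, with the sign changes in $N$ and $D$ propagated carefully. Since $D = 2^{e+\tau-1} - 3^{\tau} \equiv 2^{e+\tau-1} \pmod{3^{\tau}}$, the first step is to write
\[
\madicResidue{\tau} \equivMod{3^{\tau}} \brackets{2^{e+\tau-1} - (2^e+1)3^{\tau-1}} \resRepInv{2^{e+\tau-1}}{3^{\tau}} \equivMod{3^{\tau}} 1 - (2^e+1)3^{\tau-1}\resRepInv{2^{e+\tau-1}}{3}.
\]
Since the coefficient of $3^{\tau-1}$ only matters modulo $3$, I would reduce using $2\equiv -1\pmod 3$ to obtain
$(2^e+1)\resRepInv{2^{e+\tau-1}}{3} \equiv (1+\negPower{e})\negPower{e+\tau-1}\pmod 3$.
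The odd-$e$ case collapses immediately to $\madicResidue{\tau} = 1$, and for even $e$ the two parities of $\tau$ yield $1 - 3^{\tau-1}$ and $1 - 2\cdot 3^{\tau-1}$, which reduce modulo $3^{\tau}$ to $2\cdot 3^{\tau-1}+1$ and $3^{\tau-1}+1$, respectively, matching the claimed table.

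For the $\graded{2}$-residue, the key reduction is $N \equivMod{2^{e+\tau-1}} -(2^e+1)3^{\tau-1}$ and $D\equivMod{2^{e+\tau-1}}-3^{\tau}$, so that
\[
\ladicResidue{\tau} \equivMod{2^{e+\tau-1}} (2^e+1)\,3^{\tau-1}\resRepInv{3^{\tau}}{2^{e+\tau-1}} \equivMod{2^{e+\tau-1}} (2^e+1)\resRepInv{3}{2^{e+\tau-1}}.
\]
I would then compute $\resRepInv{3}{2^m}$ with $m=e+\tau-1$ via the elementary identity $3\mid 2^m+1$ when $m$ is odd and $3\mid 2^{m+1}+1$ when $m$ is even, so that $\resRepInv{3}{2^m}$ equals $(2^m+1)/3$ or $(2^{m+1}+1)/3$ accordingly. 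Multiplying by $2^e+1$ and then reducing the product modulo $2^m$ gives the four cases in the statement.

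The main obstacle is the final reduction: after multiplication one obtains an expression like $(2^e+1)(2^{m+1}+1)/3$ or $(2^e+1)(2^m+1)/3$, and I have to verify both that it is an integer (which requires $3\mid 2^e+1$ when $e$ is odd, and $3\mid 2^e(2^{\tau-1}+1)+1$ or $3\mid 2^e(2^\tau+1)+1$ when $e$ is even) and that the canonical representative lies in $\nZeroOpenSet{2^{e+\tau-1}}$. The integrality checks follow by reducing the numerators mod $3$ using $2^e \equiv \negPower{e}\pmod 3$ and tracking parities of $e$ and $\tau$. The range checks reduce to verifying that each candidate value is strictly less than $2^{e+\tau-1}$; since each candidate is bounded above by $(2^{e+\tau}+2^e+1)/3 < 2^{e+\tau-1}$ (as $2/3<1$), this is a short direct computation.

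Once the cases are assembled, Theorem \ref{theorem::ThreeEcksMinusOneResidues} follows. I expect the bookkeeping of parities (four cases for $\madicResidue{\tau}$ and four for $\ladicResidue{\tau}$) to be the main source of length; the mathematical content beyond the $3x+1$ argument is only the observation that switching $\translationValueSymbol$ from $+1$ to $-1$ flips the signs of $N$ and $D$ in tandem, preserving the ratio but swapping between $1\pm(\cdot)$ forms in the residues.
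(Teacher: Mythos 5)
Your proposal is correct and follows essentially the same route as the paper: the paper's proof of this theorem is exactly the elementary modular arithmetic computation of $ND^{-1}$ modulo $3^{\tau}$ and modulo $2^{e+\tau-1}$, using $\resRepInv{2^{u}}{3}\equiv\negPower{u}$ and the explicit form of $\resRepInv{3}{2^{m}}$, followed by a four-case parity check. The only cosmetic difference is that the paper splits the $\graded{2}$-side term as $2^{e}\resRepInv{3}{2^{\tau-1}}+\resRepInv{3}{2^{e+\tau-1}}$ so each summand is already reduced, whereas you carry the single product $(2^{e}+1)\resRepInv{3}{2^{e+\tau-1}}$ and reduce at the end; both yield the same canonical representatives.
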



Analogous to Lemma \ref{Lemma::ThreeEcksPlusOneResidueCondition}, the following lemma will aid in identifying circuits within the $3x-1$ Dynamical System.

\begin{lemma}\label{Lemma::ThreeEcksMinusOneResidueCondition}
Assume that the $3$-remainder  is
\begin{equation}
\madicResidue{\tau} = 
	\begin{cases}
		2\cdot 3^{\tau-1}+1 & e\equivMod{2} 0\\
		1& e\equivMod{2} 1
	\end{cases}\notag
\end{equation} when $\tau \equivMod{2} 0$, and
\begin{equation}
\madicResidue{\tau} = 
	\begin{cases}
		3^{\tau-1}+1 & e\equivMod{2} 0 \\
		1& e\equivMod{2} 1 
	\end{cases}\notag
\end{equation} when $\tau \equivMod{2} 1$.  Moreover, assume that the $\graded{2}$-remainder is
\begin{equation}
\ladicResidue{\tau} = 
	\begin{cases}
		\frac{2^e\parentheses{2^{\tau}+1}+1}{3} & e\equivMod{2} 0\\
		\frac{2^e+1}{3}& e\equivMod{2} 1
	\end{cases}\notag
\end{equation} when $\tau \equivMod{2} 0$, and
\begin{equation}
\ladicResidue{\tau} = 
	\begin{cases}
		\frac{2^e\parentheses{2^{\tau-1}+1}+1}{3} & e\equivMod{2} 0 \\
		\frac{2^e+1}{3}& e\equivMod{2} 1 
	\end{cases}\notag
\end{equation} when $\tau \equivMod{2} 1$. 

The equality $\madicResidue{\tau} = \ladicResidue{\tau}$ holds if and only if either i.) $e = 1$ or  ii.) $e = \tau=2$.
\end{lemma}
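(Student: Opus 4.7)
The plan is to verify the biconditional by exhausting the four subcases determined by the parities of $e$ and $\tau$. In each case I would substitute the given closed forms for $\madicResidue{\tau}$ and $\ladicResidue{\tau}$, clear the denominator of three, and analyze the resulting exponential Diophantine equation.

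First, whenever $e$ is odd, both parities of $\tau$ collapse to $\madicResidue{\tau}=1$ and $\ladicResidue{\tau}=\frac{2^e+1}{3}$. The equation $\madicResidue{\tau}=\ladicResidue{\tau}$ then becomes $2^e+1=3$, which forces $e=1$ and imposes no condition on $\tau$.

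Next, when $\tau$ is odd and $e$ is even, clearing denominators produces the relation $3^{\tau}+2 = 2^e(2^{\tau-1}+1)$. The left side is odd, since $3^\tau$ is odd and $2$ is even; but $e\ge 2$ makes the right side even, so this case admits no solutions.

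The final case, $\tau$ even and $e$ even, is the only one requiring a short arithmetic argument. Clearing denominators gives $3^\tau+1 = 2^{e-1}(2^\tau+1)$. Since $2^\tau+1$ is odd, the right side has $2$-adic valuation $e-1$; on the other hand $3^2\equiv 1\pmod{8}$ gives $3^\tau\equiv 1\pmod{8}$ for even $\tau$, so $v_2(3^\tau+1)=1$. This forces $e=2$, reducing the problem to $3^\tau = 2^{\tau+1}+1$. Direct substitution verifies the equality at $\tau=2$ (both sides equal $9$); for even $\tau\ge 4$ the estimate $(3/2)^\tau \ge (3/2)^4 > 2$ yields $3^\tau > 2^{\tau+1}+1$, excluding any further solution. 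Combining the cases recovers exactly the list $e=1$ or $e=\tau=2$, and the only step with any content is the $2$-adic valuation argument pinning $e=2$.
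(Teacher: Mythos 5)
Your case decomposition and the first three cases match the paper's proof almost exactly: the odd-$e$ cases reduce to $2^e+1=3$, hence $e=1$; the ($e$ even, $\tau$ odd) case dies by a parity comparison (the paper compares $\madicResidue{\tau}\equivMod{2}0$ against $\ladicResidue{\tau}\equivMod{2}1$ directly, you clear denominators first and compare $3^{\tau}+2$ odd against $2^{e}\parentheses{2^{\tau-1}+1}$ even — same content); and in the ($e$ even, $\tau$ even) case both arguments pin $e=2$ by computing the $2$-adic valuation of $3^{\tau}+1$ (the paper works mod $4$, you work mod $8$). The one genuine divergence is the endgame: the paper disposes of $3^{\tau}=2^{\tau+1}+1$ by citing Gersonides' theorem on harmonic numbers, whereas you use the elementary growth estimate $\pfrac{3}{2}^{\tau}\geq\pfrac{3}{2}^{4}$ for $\tau\geq 4$. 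Your route is more self-contained and avoids an external citation; the paper's is shorter on the page. One small imprecision: as written, $\pfrac{3}{2}^{\tau}>2$ only yields $3^{\tau}>2^{\tau+1}$, which by integrality gives $3^{\tau}\geq 2^{\tau+1}+1$ and does not by itself exclude equality. You should instead use the full strength of $\pfrac{3}{2}^{4}=\frac{81}{16}>2+2^{-\tau}$, which gives $3^{\tau}>2^{\tau+1}+1$ outright for $\tau\geq 4$; with that adjustment the argument is complete.
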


\begin{proof}

When $e\equivMod{2} 1$, we require that the equality
$
\frac{2^e+1}{3} = 1
$ holds; consequently, we require that $e=1$ (irrespective of the parity of $\tau$).

When $e\equivMod{2} 0$ and $\tau \equivMod{2}0$, we require that the equality 
$
2\cdot 3^{\tau-1}+1 = \frac{2^e\parentheses{2^{\tau}+1}+1}{3}
$ holds. Equivalently, we require that
$
2\cdot 3^{\tau}+3= {2^e\parentheses{2^{\tau}+1}+1};
$ after simplifying, we require that
$
\frac{3^{\tau}+1}{2^{e-1}} = 2^{\tau}+1.
$  When $\tau\equivMod{2} 0$, the numerator on the left-hand side
$
9^{\frac{\tau}{2}}+1 \equivMod{4} 2;
$ thus, it follows that we require that $e=2$.  The equality
$
3^{\tau} = 2^{\tau+1}+1
$  holds only when $\tau=2$ as per a result of Gersonides\footnote{Levi Ben Gerson, 1342 AD. See \cite{gersonides}.} on {\sl harmonic numbers}.

When $e\equivMod{2} 0$ and $\tau \equivMod{2} 1$, we have $\madicResidue{\tau} \equivMod{2} 0$ and $\ladicResidue{\tau} \equivMod{2} 1$.
$\hspace{20pt}$
\end{proof}

\begin{proof}[Theorem \ref{theorem::ThreeEcksMinusOneResidues}]

We can write
\[
\madicResidue{\tau} \equivMod{3^{\tau}} N\bracketsInverse{2^{e+\tau-1}-3^{\tau}} 
 \equivMod{3^{\tau}} \brackets{2^{e+\tau-1}-(2^e+1)3^{\tau-1}}\resRepInv{2^{e+\tau-1}}{} 
  \equivMod{3^{\tau}}1 - \brackets{\resRepInv{2^{\tau-1}}{3^1} + \resRepInv{2^{e+\tau-1}}{3^1}} 3^{\tau-1}.
\]

As $\resRepInv{2^{u}}{3^1} \equivMod{3} \negPower{u}$ for $u\in \naturals$, it follows that
$
\madicResidue{\tau} \equivMod{3^{\tau}} 1 + 3^{\tau-1}\negPower{\tau}\brackets{1 + \negPower{e}}.
$
For the $\graded{2}$-remainder, we begin by writing
\[
\ladicResidue{\tau} \equivMod{2^{e+\tau-1}} N\bracketsInverse{2^{\less{\tau}}-3^{\tau}} 
 \equivMod{2^{e+\tau-1}}  \brackets{2^{e+\tau-1} - (2^e+1)3^{\tau-1} }\resRepInv{-3^{\tau}}{} 
  \equivMod{2^{e+\tau-1}} 2^e\resRepInv{3}{2^{\tau-1}} + \resRepInv{3}{2^{e+\tau-1}}.
\]

We will write $\resRepInv{3}{2^{\tau-1}} = \frac{2^{\tau-\parentheses{\tau-1}\bmod 2}+1}{3}$, and $\resRepInv{3}{2^{e+\tau-1}} = \frac{2^{e+\tau-\parentheses{e+\tau-1}\bmod 2}+1}{3}$, and we will  complete the proof by cases.
\begin{enumerate}[i.]
	\item ($e\equivMod{2} 0, \tau \equivMod{2} 0$) $\madicResidue{\tau} = 2\cdot3^{\tau-1}+1$, and $\ladicResidue{\tau} =  \brackets{2^e\pfrac{2^{\tau-1}+1}{3} + \frac{2^{e+\tau-1}+1}{3}} \bmod {2^{e+\tau-1}} = \frac{2^{e+\tau} + 2^e+1}{3}$
	\item ($e\equivMod{2} 0, \tau \equivMod{2} 1$) $\madicResidue{\tau} = 3^{\tau-1}+1$, and $\ladicResidue{\tau} =  \brackets{2^e\pfrac{2^{\tau}+1}{3} + \frac{2^{e+\tau}+1}{3}} \bmod {2^{e+\tau-1}} = \frac{2^{e+\tau-1} + 2^e+1}{3}$. 
	\item ($e\equivMod{2} 1, \tau \equivMod{2} 0$) $\madicResidue{\tau}  = 1$, and $\ladicResidue{\tau} =  \brackets{2^e\pfrac{2^{\tau-1}+1}{3} + \frac{2^{e+\tau}+1}{3}} \bmod {2^{e+\tau-1}} = \frac{2^e+1}{3}$. 
	\item  ($e\equivMod{2} 1, \tau \equivMod{2} 1$) $\madicResidue{\tau}  = 1$, and $\ladicResidue{\tau} =  \brackets{2^e\pfrac{2^{\tau}+1}{3} + \frac{2^{e+\tau-1}+1}{3}} \bmod {2^{e+\tau-1}} = \frac{2^e+1}{3}$.
\end{enumerate}

\end{proof}
	
Thus, under the assumption that $\iterate{} < 2^{e+\tau-1}$, the only circuits within the $3x-1$ dynamical system are $(1)$ and $(5,7)$.


\bibliographystyle{plain}

\end{document}